\newcommand{\edit}[1]{\textcolor{black}{#1}}
\DeclarePairedDelimiter{\set}{\lbrace}{\rbrace}
\newcommand{\N}{\mathbb{N}}
\newcommand{\E}{\mathbb{E}}
\newcommand{\R}{\mathbb{R}}
\newtheorem{thm}{Theorem}[section]
\newtheorem{lem}[thm]{Lemma}
\newtheorem{prop}[thm]{Proposition}
\author{
Arne Grauer\thanks{Department of Mathematics and Computer Science, University of Cologne, Weyertal 86-90, 50931 K\"oln, Germany.}\\AGrauer@math.uni-koeln.de
\and
Lukas L\"{u}chtrath\footnotemark[1]\\L.Luechtrath@uni-koeln.de
\and
Mark Yarrow\thanks{School of Mathematics and Statistics, University of Sheffield, Hicks Building, S3 7RH.}\footnotemark[2]\\ Mark.Yarrow@sheffield.ac.uk
}
\title{Preferential attachment with location-based choice: Degree distribution in the noncondensation phase}
\ifundef{\abstract}{}{\patchcmd{\abstract}%
{\quotation}{\quotation\noindent\ignorespaces}{}{}}
\begin{document}
\maketitle 
\begin{abstract}
We consider the preferential attachment model with location-based choice introduced by Haslegrave, Jordan and Yarrow as a model in which condensation phenomena can occur \cite{Sou.2}. In this model, each vertex carries an independent and uniformly distributed location. Starting from an initial tree, the model evolves in discrete time. At every time step, a new vertex is added to the tree by selecting $r$ candidate vertices from the graph with replacement according to a sampling probability proportional to these vertices' degrees. The new vertex then connects to one of the candidates according to a given probability associated to the ranking of their locations. In this paper, we introduce a function that describes the phase transition when condensation can occur. Considering the noncondensation phase, we use stochastic approximation methods to investigate bounds for the (asymptotic) proportion of vertices inside a given interval of a given maximum degree. We use these bounds to observe a power law for the asymptotic degree distribution described by the aforementioned function. Hence, this function fully \edit{characterises} the properties we are interested in. The power law exponent takes the critical value one at the phase transition between the condensation - noncondensation phase.
\\
\\
\footnotesize{{\textbf{AMS-MSC 2010}: 05C80}\\{\bf Key Words}: Choice, Phase Transition, Power Law, Simulation.}
\end{abstract}
\newpage
\section{Introduction}\label{SecIntro}
The study of complex networks is a prevalent area of interest for researchers as many seemingly dissimilar structures observable in the real world can be modelled using \edit{a common set of} techniques. This is due to many large networks sharing similar topological properties. \edit{For instance, it has been observed that the empirical degree distribution of many large-scale real world networks follows an approximate \emph{power law} over a large finite range of degrees. Hence, we seek families of models that imitate this behaviour.} 

\edit{A probabilistic approach is to build networks as a growing sequence of graphs in which the degree distribution follows a power-law when the number of vertices is going to infinity. That is, the tail of the asymptotic proportion of vertices of degree at least $k$ behaves like $k^{-\tau}$ for some power-law exponent $\tau$. We call such a network \emph{scale-free}.} 

In 1999 Barab\'asi and Albert popularised \emph{preferential attachment} \cite{P.10} as a method of growth which utilises the famous \emph{rich get richer} concept. As a new vertex joins the network, it forms an edge to already existing vertices with probability proportional to the degrees of current vertices. This mechanism was generalised by Dorogovtsev et al \cite{P.42} by biasing the selection mechanism to enhance or suppress the influence of the degrees. It was shown by various authors that this \edit{building} mechanism indeed leads to scale-free networks \cite{P.36, P.42, P.24}. 
Although preferential attachment is often an accurate method of modelling \edit{scale-free} networks, it fails to consider a new vertex's potential to attract new edges. In order to tackle this issue, Bianconi and Barab\'asi \cite{P.1} suggested the addition of \emph{vertex fitness} as an additional parameter. Here, each vertex joins the network with its own randomly chosen fitness, allowing for a new level of competition between vertices, separate from their current edge-based popularity. Many models have been devised which include this `attractiveness' coefficient, most notably by Borgs et al.\ \cite{P.2} and Dereich and Ortgiese \cite{P.4}. Another way of incorporating a vertex's inherent potential for growth is by introducing the notion of \emph{choice}. In \cite{P.6,P.8,P.7} preferential attachment is used to sample a set of vertices from the network as candidates for connection. Afterwards, a preassigned attachment rule based on the degrees of the sampled vertices is used to decide where new edges are formed. 

Furthermore, a feature of interest is the \emph{condensation} \edit{phenomenon}. Condensation occurs if the total degree of an $o(n)$ subset of vertices grows linearly in time $n$. Loosely speaking, at any time there exists some vertex whose degree \edit{dramatically} dominates the others. Whereas in classical preferential attachment condensation cannot occur, it was shown that both preferential attachment with choice and models with fitness can \edit{exhibit} condensation \cite{P.2,P.15,P.3, P.4,P.5}. 

In this paper, we consider the preferential attachment with location-based choice model \edit{introduced} by Haslegrave, Jordan and Yarrow in \cite{Sou.2} which can be seen as a generalised variant of \cite{Sou.1}. This model combines the ideas of both fitness and choice in a natural way. Starting from an initial tree graph, at each time step a new vertex joins the graph and is assigned its own location which is uniformly chosen from $(0,1)$. When this vertex joins the network, a subset of $r$ \edit{neighbour candidates} is sampled with probability proportional to their degree plus some constant $\alpha$. The sampled vertices are ranked according to their locations. Following this, a single vertex from the sample is chosen for connection to the new vertex according to some probability measure $\Xi$. Here, $\Xi$ can be used to make different regions of $(0,1)$ more or less appealing and thus incorporates more flexibility than in previous models. As in \cite{Sou.2}, we refer to location \edit{as} opposed to fitness \edit{in order not} to give the false impression of preferring the `fittest' vertex. We could choose any \edit{continuous} distribution on the real line but we do not expect any changes in the results as the connection mechanism only depends on the ordering of the vertices' locations and not their actual value. Hence, \edit{there is no loss in generality by restricting the locations distribution to the uniform one on $(0,1)$}. This has the added benefit that it matches previous work our results build on. \edit{From \cite{Sou.2}, it can be derived} that there exists a critical value $\alpha_c$ such that condensation can occur if $\alpha<\alpha_c$.

In this article, we \edit{give a new description of $\alpha_c$} and study the degree-distribution of this model in the noncondensation regime. We show that in the noncondensation case, $\edit{\alpha\geq\alpha_c}$, the model is asymptotically scale-free with a heavy tailed degree distribution with power-law exponent $\tau=\frac{2+\alpha}{2+\alpha_c}$. Hence, the critical value $\alpha_c$ for the condensation phase transition matches the one for which the power-law exponent is large enough for the degree distribution's first moment to exist. This behaviour coincides with our understanding of condensation. In the condensation phase, with positive probability \edit{a proportionally small number of vertices dominate the others}. The noncondensation phase is `regular' in the sense that a typical vertex has finite expected degree. As the behaviour of the degree distribution dramatically changes between the two phases, we lose the finite moments at that phase transition, \edit{even though for $\alpha=\alpha_c$ the network is still scale-free with $\tau=1$}. The same behaviour can be observed in similar models with choice \cite{P.6}. Although a power-law distribution is what one would hope for in the considered regime, it is notable that this is not the case in the original preferential attachment model with choice of Malyshkin and Paquette for more than two options \cite{P.8}. 

To derive the degree distribution, we introduce a function $f$ on the location space $(0,1)$ depending only on $\Xi$ that plays a key role in understanding the influence of location on \edit{the degree of a vertex}. Given a vertex with location $x$, the expected probability of choosing that vertex with respect to $\Xi$, out of a sample containing this vertex and $r-1$ uniformly located vertices is given by $f(x)/r$. We show that the condensation phase transition as well as the power-law exponent can be derived from the maximum value of $f$. To get this, we determine the concrete degree distribution of a vertex at a given location whose tail behaviour follows a power-law distribution dependent on $f$ from which we derive the final result. The function $f$ hints at where to search for the high degree vertices. Specifically, the larger the values of $f$ in a specific region, the more likely we are to find high degree vertices there. The question of the degree distribution in the condensation phase is also of some interest but cannot be achieved with our methods since we rely on some continuity properties in our proof that are not fulfilled in the condensation regime. 

The paper is structured as follows: In Section~\ref{SecTheModel} we formally introduce the model and formulate the main theorem. Afterwards we recall the phase transition conditions determined in \cite{Sou.2}. We introduce formally the function $f$ and rewrite these conditions. In Section~\ref{SecDegreeDistr}, we use stochastic approximation methods to deduce bounds of the growth of the empirical degree distribution. We use these bounds to deduce the asymptotic degree distribution, proving the main theorem. In the last section, we show numerical results and simulations for some interesting and important choices of $\Xi$ underlining our understanding and results.

\section{Model description and main result}\label{SecTheModel}
Let $r\geq 2$ be an initial integer model parameter and let $\Xi$ be a probability measure on $\set{1,\dots,r}$. In the following, we treat $\Xi$ as a probability vector $(\Xi_1,\dots,\Xi_r)$. 
Furthermore, let $G_0$ be an initial tree graph on $n_0\geq 2$ vertices $\set{v_{1-n_0},\dots, v_0}$. Additionally, let each vertex $v_i$ in $G_0$ have its own location $x_i$ that is drawn independently and uniformly at random from $(0,1)$ and is therefore almost surely unique. 

At time $n+1$, a new vertex $v_{n+1}$ assigned its own location $x_{n+1}$, again drawn independently and uniformly at random from $(0,1)$ is added to the graph. Given $G_n$ \edit{and the locations of all its vertices}, we form the graph $G_{n+1}$ by connecting the new vertex $v_{n+1}$ by a single edge to a vertex in $G_n$. Note that this maintains the tree structure of the graph. The connection mechanism is as follows: First, we sample $r$ candidate vertices with replacement from $G_n$ according to preferential attachment, i.e.\ proportional to the vertices' degrees plus a fixed constant $\alpha$. Second, $v_{n+1}$ chooses one vertex for connection out of the sample \edit{according to $\Xi$ applied to the ranks of the locations}. More precisely, fix $\alpha\in(-1,\infty)$ and denote $\deg_{G_n}(v_j)$ as the degree of vertex $v_j$ in $G_n$. We first select a sample of $r$ candidate vertices from $G_n$ with replacement \edit{so that independently for each of the $r$ candidates}
\begin{equation}\label{PA}
\mathbb{P}(v_i\edit{\text{ is sampled}\mid G_n})=\frac{\deg_{G_n}(v_i)+\alpha}{(n+n_0-1)(2+\alpha)+\alpha}.
\end{equation}
Here, due to the tree structure, the denominator equals the total degree weight of $G_n$, that is the sum over each vertices' degree plus $\alpha$. We next order the $r$ \edit{sampled vertices} according to their location. That is, we obtain a sample of vertices \edit{$\big(v^{(n+1)}_1,\dots, v^{(n+1)}_r\big)$} and associated locations \edit{$\big(x^{(n+1)}_1,\dots,x^{(n+1)}_r\big)$} such that the locations satisfy $x^{(n+1)}_1\leq\dots\leq x^{(n+1)}_r$. An important observation is that equality for the locations happens almost surely only if a vertex has been sampled multiple times. Thus, the ordered sample is uniquely determined. Finally, according to $\Xi$ one vertex out of \edit{$\big(v^{(n+1)}_1,\dots,v^{(n+1)}_r\big)$} is chosen for connection. That is, the probability that vertex $v^{(n+1)}_j$ is chosen for connection is given by $\Xi_j$. 

\subsection{Main Result} \label{SubSecMainResult}
As mentioned in the introduction, it is known that there exists a threshold $\alpha_c$ such that condensation can only occur if and only if $\alpha< \alpha_c$, see \cite{Sou.2} \edit{and Proposition~\ref{propAlphaCrit} below}. Let $\mu_k$ be the asymptotic proportion of vertices of degree at least $k$. 
\medskip
\begin{thm}\label{ThmPowerLaw} If $\alpha \edit{\geq} \alpha_c$, then \edit{$\mu_k$ exists and satisfies} 
\[\mu_k = k^{-\frac{2+\alpha}{2+\alpha_c}+o(1)},\]
as $k\to\infty$.
\end{thm}
\medskip
In order to prove this result, one has to understand the influence of the location on a vertex's degree. To this end, define $\Psi_n(x)$ as the \edit{conditional probability, given the graph $G_n$ and the locations of all the vertices of $G_n$,} that the new vertex $v_{n+1}$ selects under preferential attachment according to equation \eqref{PA} a vertex which has location at most $x$. Denote by $V(G_n)$ the vertex set of $G_n$. \edit{Then it holds that}
\begin{equation}\label{psinx}
\Psi_n(x)=\frac{1}{(n+n_0-1)(2+\alpha)+\alpha}\left(\sum_{v_i\in V(G_n):x_i\leq x}(\deg_{G_n}(v_i)+\alpha)\right). 
\end{equation}
The \edit{random} measures induced by $\Psi_n(x)$ converge weakly almost surely to a probability measure on $[0,1]$, whose continuous distribution function we call $\Psi(x)$ \cite[Theorem 2.2]{Sou.2}. \edit{Here, it is important to note that in general $\Psi(x)$ may be random. However, it is not random whenever $\alpha\geq\alpha_c$}. Finally, define the function $f:[0,1]\to \mathbb{R}_+$ by
\begin{equation}\label{connectprobwoPA}
f(x) = \sum^r_{s=1}s\Xi_s{r\choose s}x^{s-1}\left(1-x\right)^{r-s} ,
\end{equation}
(for more details about $\Psi_n, \Psi$ and $f$, we refer the reader to Section \ref{SubSecPhase}.) Conditioned on the event that there is a vertex at a given location $x$, we denote by $\nu(k, x)$ the probability that the vertex at location $x$ has asymptotically at least $k$ neighbours.
\begin{thm} \label{ThmLocalDegree} If $\alpha\edit{\geq}\alpha_c$ and \edit{$x\in(0,1)$}, \edit{then $\nu(k,x)$ is well-defined and satisfies}
\begin{equation*}
\nu(k,x)= k^{-\frac{2+\alpha}{f(\Psi(x))}+o(1)},
\end{equation*}
as $k\to\infty$.
\end{thm}
\begin{figure}
\begin{center}
\includegraphics[scale=0.3]{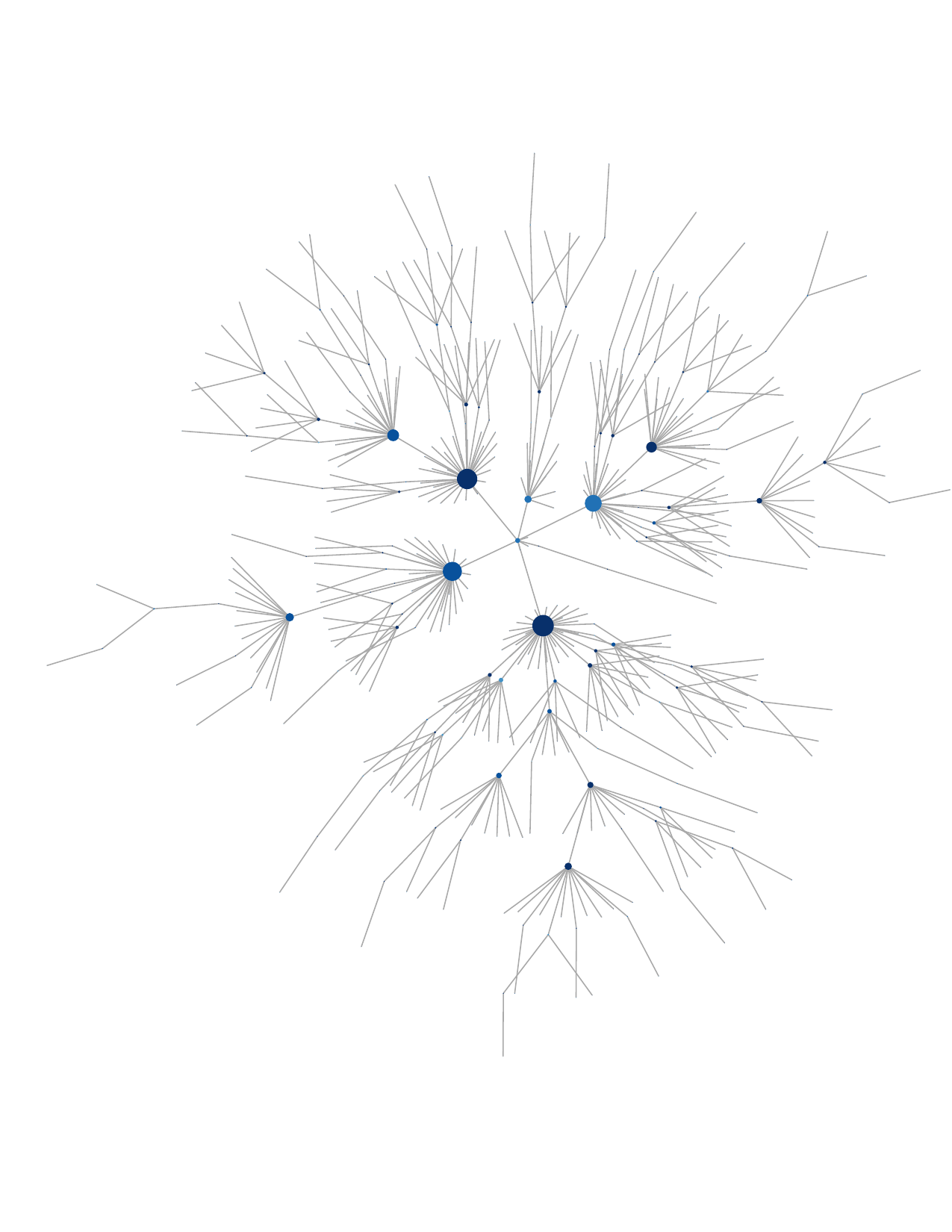}
\includegraphics[scale=0.3]{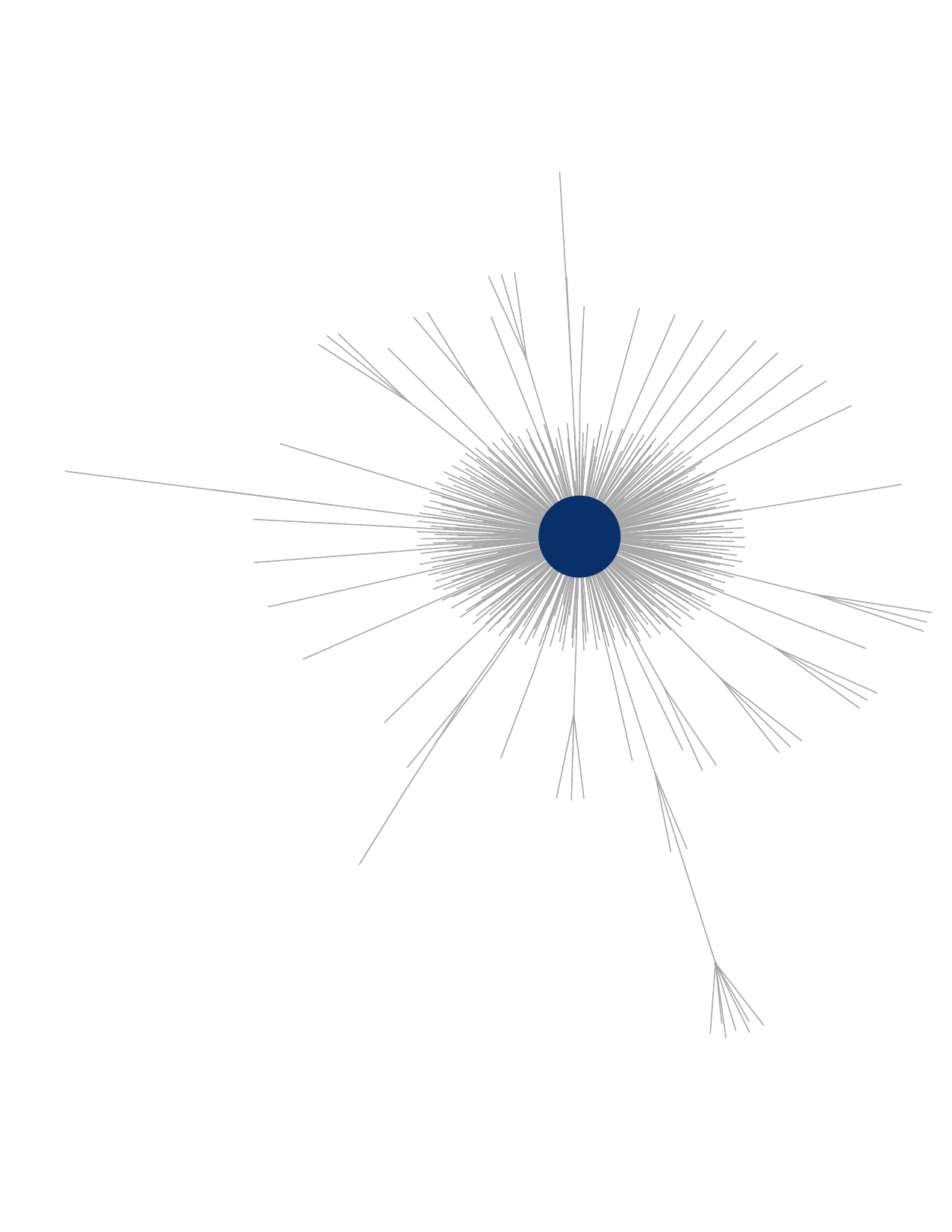}
\label{fig:Tree}
\caption{Plots of a simulated tree for $\Xi = (0,1,0)$ after $500$ vertices have been added. On the left, a realization for $\alpha>\alpha_c$ and on the right, a realization with $\alpha<\alpha_c$. In both cases, the start configuration consists of a root vertex and a single child, both with uniform drawn location. In the plot, the size of a vertex corresponds to its degree. We use colour saturation to indicate how close to the maximum value of $f$ a vertex's location is.}
\end{center}
\end{figure}

\subsection{Condensation phase transition}\label{SubSecPhase}
According to \eqref{psinx}, $\Psi_n(x)$ is almost surely monotonically increasing with $\Psi_n(0)=0$ and $\Psi_n(1)=1$. Hence, we can think of $\Psi_n(x)$ as a random distribution function on the location space. The measures induced by $\Psi_n(x)$ converge weakly almost surely to a (possibly random) probability measure on $[0,1]$. We call the distribution function of this limit $\Psi(x)$. 

\edit{We define condensation as a \emph{discontinuity} in $\Psi$ since a jumping point of $\Psi$ implies that $\Psi_n$ increases by $O(1)$ on an interval of length $o(1)$, as $n\to\infty$, matching the condensation description given in the introduction. Here, condensation may arise due to the existence of a persistent hub \cite[Theorem~2.3.]{Sou.2} as well as without a hub where the currently leading vertex is replaced over time \cite[Theorem~2.4.]{Sou.2}. The following proposition summarises arguments from \cite{Sou.2} showing that the discontinuity of $\Psi$ can only occur if $\alpha$ is smaller than the threshold $\alpha_c$. Additionally, it gives a new description of this threshold. We call $(-1,\alpha_c)$ the \emph{condensation} and $[\alpha_c,\infty)$ the \emph{noncondensation phase} of the model.} 

\edit{The function $f$ (see \eqref{connectprobwoPA}) only depends on the model parameter $\Xi$ and plays an important role in characterising the condensation phase transition.} It can be observed that $f$ is a probability density on $[0,1]$. Rewriting the binomial coefficient, one can interpret $f(x)/r$ as the expected probability of connecting with respect to $\Xi$ to a given vertex of location $x$ where the remaining $r-1$ vertices are chosen uniformly.
\edit{
\begin{prop}\label{propAlphaCrit}
There exists $\alpha_c\geq -1$ such that condensation can occur with a positive probability if $\alpha<\alpha_c$ but cannot occur if $\alpha\geq\alpha_c$. \\
Moreover, 
\begin{equation} 
\alpha_c = \max \set{f(y) : y\in [0,1]} - 2. \label{critalpha}
\end{equation}
\end{prop} }
\begin{proof}
As a function of $y\in[0,1]$, we define
\[
F_1(y;x,\Xi) = x(\alpha + 1) - (2+\alpha)y + \sum_{s=1}^r \Xi_s \sum_{i=s}^r \binom{r}{i} y^i (1-y)^{r-i}
\]
for $x\in [0,1]$. \edit{By \cite[Theorem 2.2]{Sou.2}, $\Psi_n(x)$ converges almost surely to a zero of the function $F_1(y;x,\Xi)$ and by \cite[Theorems 2.3, 2.4]{Sou.2} condensation} occurs with positive probability, whenever there exists $x\in (0,1)$ such that $F_1(y;x,\Xi)$ has a touchpoint. Here, we call $p\in (0,1)$ a touchpoint if $F_1(p;x,\Xi) = 0$ and there exists $\varepsilon>0$ such that either $F_1(y;x,\Xi) < 0$ for all $y\in (p-\varepsilon, p + \varepsilon)\backslash \set{p}$ or $F_1(y;x,\Xi) > 0$ for all $y\in (p-\varepsilon, p + \varepsilon)\backslash \set{p}$. Note that if $F_1(y;x,\Xi)$ is increasing in $y$ somewhere on $[0,1]$, one can vary $x$ in such a way that $F_1(y;x,\Xi)$ has a touchpoint. Hence, condensation can occur with positive probability for $\alpha < \alpha_c$, where
\begin{equation*}
\alpha_c = \inf \set{\alpha > -1 : F_1'(y;x,\Xi) \leq 0\ \text{for all}\ y\in (0,1)},
\end{equation*}
\edit{see also \cite[p. 792]{Sou.2}. Conversely,} if $\alpha \geq \alpha_c$, then, for all $x\in [0,1]$, $F_1(y;x,\Xi)$ has only one zero to which $\Psi_n(x)$ converges almost surely. Since $F_1$ is continuous \edit{and strictly decreasing in the neighbourhood of the root}, the zero $\Psi$ is continuous and almost surely no condensation can occur, \edit{proving the first part of the proposition. To prove \eqref{critalpha}, we calculate}
\begin{align*}
F_1'(y;x,\Xi) &= -(2+\alpha) + \sum_{s=1}^r \Xi_s \sum_{i=s}^r i \binom{r}{i} y^{i-1} (1-y)^{r-i} \left( 1 - \frac{r-i}{i} y (1-y)^{-1}\right)\\
&= -(2+\alpha) + \sum_{s=1}^r s \Xi_s \binom{r}{s} y^{s-1} (1-y)^{r-s} = - (2 + \alpha) + f(y). 
\end{align*}
\edit{Hence,} $F_1'(y;x,\Xi) \leq 0$ holds for all $y\in (0,1)$ if and only if $2+\alpha \geq \max\set{f(y): y\in [0,1]}$.
\end{proof}
\edit{Proposition \ref{propAlphaCrit} shows} that both preferential attachment and the location-based choice aspect are necessary for condensation to occur. Sampling according to preferential attachment but then choosing one vertex independently of the locations coincides with the choice of $\Xi_s=1/r$. Then, $f(x)\equiv 1$ and thus $\alpha_c=-1<\alpha$ for all $\alpha \in(-1,\infty)$. By Theorem~\ref{ThmPowerLaw}, the network is then scale-free with power-law exponent $\tau=2+\alpha\in(1,\infty)$, matching the results of \cite{P.42}. On the other hand, sampling without preferential attachment in this model coincides with the case $\alpha\to \infty$. Therefore, it holds $\alpha>\alpha_c$ for all choices of $\Xi$.
\edit{Summarizing the above, whenever $\alpha\geq \alpha_c$, no condensation can occur and the limiting distribution $\Psi$ is continuous and non-random. This is shown to be important in following sections.}

\section{Noncondensation phase degree distribution} \label{SecDegreeDistr}
\edit{We utilize a number of stochastic approximation techniques constructed by Robbins and Monro \cite{P.28} outlined in Pemantle \cite[Section~2]{P.9}. For a stochastic process $(X_n)_{n\in\N}\subset \R^n$ adapted to a filtration $(\mathcal{F}_n)_{n\in\N}$, the idea of stochastic approximation is to find a representation of the increments $X_{n+1}-X_n$ which fulfills certain properties. This then allows for results on the asymptotic behaviour of the process to be derived. Classically, we call equations of the form
\[X_{n+1}-X_n = \tfrac{1}{n}\big(F(X_n)+\xi_{n+1}+r_n\big)\]
stochastic approximation equations. Here, $F$ is an $\R^n$-vector field, $\xi_{n+1}$ is a noise term satisfying $\E[\xi_{n+1}\mid \mathcal{F}_n]=0$ and the remainder term $r_n$ is $\mathcal{F}_n$-measurable, converging to zero and satisfying $\sum_n n^{-1}|r_n|<\infty$ almost surely. Depending on the properties of $F$ and possible further assumptions on the noise $\xi_{n+1}$, different results for the asymptotic behaviour of the process are known \cite[Section~2]{P.9}. Many results can be further extended to hold, when $F$ is random, see e.g.\ \cite{ Sou.2, P.11}.} 

\edit{In our setting, we will need a statement for the asymptotic behaviour when only bounds on the increments are given. To this end, we adapt Lemma~5.4. of \cite{P.12} by Jordan and Wade.
}
\edit{
\begin{lem}\label{conflem}
Let $(\mathcal{F}_n)_{n\in\N_0}$ be a filtration. Furthermore, let $X=(X_n)_{n\in\N_0}$, $A_1=(A_1^{(n)})_{n\in\N_0}$, $A_2=(A_2^{(n)})_{n\in\N}$, $K_1=(K_1^{(n)})_{n\in\N_0}$, $K_2=(K_2^{(n)})_{n\in\N_0}$, $\xi=(\xi_n)_{n\in\N_0}$, $r_1=(r_1^{(n)})_{n\in\N_0}$ and $r_2=(r_2^{(n)})_{n\in\N_0}$ be real-valued stochastic processes adapted to $(\mathcal{F}_n)_{n\in\N_0}$ where $X$, $A_1$, $A_2$, $K_1$ and $K_2$ are non-negative and bounded. Let $(\gamma_n)_{n\in\N_0}$ be a sequence of non-negative constants and suppose that
\begin{align}
\gamma_n(A_1^{(n)}-K_1^{(n)}X_n+\xi_{n+1}+r_1^{(n)}) & \leq X_{n+1}-X_n \label{eqStochApprox} \\ & \leq \gamma_n(A_2^{(n)}-K_2^{(n)}X_n+\xi_{n+1}+r_2^{(n)}). \notag
\end{align} 
Assume further that
\begin{enumerate}
\item[(i)] $\E[\xi_{n+1}\mid \mathcal{F}_n] = 0$ and $\E[\xi_{n+1}^2\mid \mathcal{F}_n]\leq C$ for a finite constant $C$,
\item[(ii)] $\sum_{n=0}^\infty\gamma_n =\infty$, $\sum_{n=0}^\infty \gamma_n^2<\infty$ and $\sum_{n=0}^\infty |r_i^{(n)}|\gamma_n<\infty$ almost surely ($i=1,2$),
\item[(iii)] $0<\ell_i\leq K_i^{(n)}\leq u_i$ for some finite constants $\ell_i,u_i$ ($i=1,2$),
\item[(iv)] $\liminf\limits_{n\to\infty} \tfrac{A_1^{(n)}}{K_1^{(n)}}\geq L_1$ and $\limsup\limits_{n\to\infty} \tfrac{A_2^{(n)}}{K_2^{(n)}}\leq L_2$ almost surely.
\end{enumerate}
Then, almost surely, 
\begin{equation}
L_1\leq\liminf\limits_{n\to\infty} X_n\leq\limsup\limits_{n\to\infty} X_n \leq L_2.\notag
\end{equation}
\end{lem}
\begin{proof}
We only prove the lower bound for the $\liminf$ as the upper bound for the $\limsup$ works with analogous argumentation. For $\varepsilon>0$ there exists an almost surely finite $N_1$ such that $L_1\leq A_1^{(n)}/K_1^{(n)}+\varepsilon/2$ for $n\geq N_1$ by (iv). For each $x<L_1-\varepsilon$ and $n\geq N_1$, we have
\begin{align*}
A_1^{(n)}-K_1^{(n)}x\geq A_1^{(n)}-K_1^{(n)}(L_1-\varepsilon)\geq A_1^{(n)}-K_1^{(n)}\big(\tfrac{A_1^{(n)}}{K_1^{(n)}}-\tfrac{\varepsilon}{2}\big)\geq \frac{\ell_1\varepsilon}{2}>0,
\end{align*}
using (iii). Now, summing \eqref{eqStochApprox}, we get $X_n-X_0 \geq M_n+O_n$, where
\begin{equation}
M_n = \sum_{k=0}^{n-1}\gamma_k\xi_{k+1} \text{ and } O_n=\sum_{k=0}^{n-1}\gamma_k(A_1^{(k)}-K_1^{(k)}X_k+r_1^{(k)}). \notag
\end{equation}
Here, $O_n$ is $\mathcal{F}_{n-1}$-measurable and $M_n$ is a martingale satisfying
\[\E[M_{n+1}^2-M_n^2\mid\mathcal{F}_n]=\E[(M_{n+1}-M_n)^2\mid\mathcal{F}_n]=\E[\xi_{n+1}^2\gamma_n^2\mid\mathcal{F}_n]\leq C\gamma_n^2\]
by (i). Since $\gamma_n^2$ is summable by (ii), $M_n$ is $L^2$-bounded and hence there exists a finite $M_\infty$ such that $M_n\to M_{\infty}$ almost surely as $n\to\infty$. Moreover, by (ii), we have
\[R_n:= \sum_{k=0}^{n-1}\gamma_k r_1^{(k)}\to R_\infty <\infty,\]
almost surely as $n\to\infty$. Hence, for the $\varepsilon$ above, there exists $N_2$ such that
\begin{align*}
& \sup_{n\geq N_2}\sup_{m\geq 0}|M_{n+m}-M_n|\leq \frac{\varepsilon}{4} \ \text{ and } \ \sup_{n\geq N_2}\sup_{m\geq 0}|R_{n+m}-R_n|\leq \frac{\varepsilon}{4}.
\end{align*}
Now, fix some $n_0\geq N:=N_1\vee N_2$ for which $X_{n_0} < L_1-\varepsilon$. Let $\kappa_{n_0}:= \min\{t>n_0: X_t\geq L_1-\varepsilon\}$ be the first time after $n_0$ for which $X$ returns to $[L_1-\varepsilon,\infty)$. Then, for $m\geq 0$, using \eqref{eqStochApprox}, we have
\begin{align*}
X_{(n_0+m)\wedge \kappa_{n_0}}-X_{n_0} & \geq \big(M_{(n_0+m)\wedge\kappa_{n_0}}-M_{n_0}\big)+\big(R_{(n_0+m)\wedge\kappa_{n_0}}-R_{n_0}\big) \\
& \qquad +\sum_{k=n_0}^{((n_0+m)\wedge\kappa_{n_0}) -1}\gamma_k(A_1^{(k)}-K_1^{(k)}X_k) \\
&\geq -\tfrac{\varepsilon}{2}+\tfrac{\ell_1\varepsilon}{2}\sum_{k=n_0}^{((n_0+m)\wedge\kappa_{n_0})-1}\gamma_k.
\end{align*}
On $\{\kappa_{n_0}=\infty\}$, for $m\to\infty$, the left-hand-side remains finite since $X$ is a bounded process; however the right-hand-side diverges to infinity by (ii). Hence, $\kappa_{n_0}$ is almost surely finite. Furthermore, since $X_{(n_0+m)\wedge\kappa_{n_0}}\geq X_{n_0}-\varepsilon/2$ using the above calculation and $\gamma_k\geq 0$ for all $k$, the process $X$ returns to $[L_1-\varepsilon,\infty)$ without dropping below $X_{n_0}-\varepsilon/2$. Moreover, for large enough $n\geq N$,
\[X_{n+1}-X_n\geq -\tfrac{\varepsilon}{2}+\gamma_n(A_1^{(n)}-K_1^{(n)}X_n)\geq -\varepsilon,\]
as $A_1-K_1X$ is bounded and $\gamma_n$ tends to zero. Therefore, almost surely, $X_n\geq L_1-\varepsilon$ infinitely often, and for all except a finite number of $n$ any exit from $[L_1-\varepsilon,\infty)$ cannot drop under $L_1-2\varepsilon$; but starting from $[L_1-2\varepsilon,L_1-\varepsilon)$, the process $X$ returns to $[L_1-\varepsilon,\infty)$ before hitting $L_1-3\varepsilon$. Hence, $\liminf X_n\geq L_1-3\varepsilon$, almost surely. Since $\varepsilon$ was chosen arbitrarily, this concludes the proof.
\end{proof}
}

\subsection{Bounds on the empirical degree distribution}\label{SubSecBounds}
The aim of this section is to find bounds for the proportion of the vertices of degree at least $k$ located inside $[x_1,x_2]\subset (0,1)$. To this end, we define $P^{(n)}_{x_1,x_2}(k)$ as the proportion of vertices in $G_n$ that have degree at most $k$ and \edit{are} located inside the interval $[x_1,x_2]$, that is
\begin{equation*}\label{proportion}
P^{(n)}_{x_1,x_2}(k) = \frac{1}{n+n_0}\sum_{(v,x)\in G_n} \mathbbm{1}_{\set{\text{deg}_{G_n}(v)\leq k}}\mathbbm{1}_{\set{x\in[x_1,x_2]}}.
\end{equation*}
To get bounds on $P^{(n)}_{x_1,x_2}(k)$, we define the event that the new vertex $v_{n+1}$, arriving at time $n+1$, connects to a vertex of degree $k$ in $G_n$ which is located inside $[x_1,x_2]$. We denote this event by $E_{n+1}$. We cannot give a precise description of the probability of $E_{n+1}$, however we can bound it from above and below in a natural way. The estimations are made in the part in which one of the $r$ candidates of the sample is chosen for connection. \edit{First, we consider the function
\begin{equation}\label{deff1}
f_1(y_1,y_2):=\sum_{s=1}^r \Xi_s \left(\sum_{j=0}^{s-1}\sum_{i=s}^{r}{r\choose i}{i\choose j}y_1^j(y_2-y_1)^{i-j-1}(1-y_2)^{r-i}\right).
\end{equation}
Upon multiplying $f_1$ by $(y_2-y_1)$, the term inside the outer brackets states that for $r$ points ranked by $1,\dots,r$, the first $j$ points are sampled from the interval $(0,y_1)$, the next $i-j$ points (including point $s$) are sampled from $[y_1,y_2]$ and the remaining $i$ points are from $(y_2,1)$. Since we do not consider the precise ordering within the intervals, an upper bound for the probability of choosing a vertex for connection from the candidate sample with location inside $[x_1,x_2]$ is given by 
\[f_1(\Psi_n(x_1),\Psi_n(x_2))(\Psi_n(x_2) - \Psi_n(x_1)).\] 
Secondly, we denote $\mathbb{P}_n$ as the conditional probability measure given by the graph $G_n$ and all locations of vertices contained within. We also denote $w_n$ as the vertex which has been chosen for connection at time $n+1$. We have
\begin{align*}
\mathbb{P}_n &\big(w_n \text{ has degree }k \ \big| \ w_n\text{ is located inside } [x_1,x_2]\big) \\
& = \frac{\mathbb{P}_n\big(w_n \text{ has degree }k \text{ and is located inside }[x_1,x_2]\big)}{\Psi_n(x_2) - \Psi_n(x_1)} \\
& = \tfrac{k+\alpha}{(2+\alpha)(n+n_0)-2} (n+n_0)\left(P^{(n)}_{x_1,x_2}(k)-P^{(n)}_{x_1,x_2}(k-1)\right)\tfrac{1}{\edit{\Psi_n(x_2) - \Psi_n(x_1)}}.
\end{align*} 
This holds because the first factor is the probability in which a vertex of degree $k$ is sampled according to \eqref{PA} and $(n+n_0)\big(P^{(n)}_{x_1,x_2}(k)-P^{(n)}_{x_1,x_2}(k-1)\big)$ counts the number of degree $k$ vertices with corresponding locations inside the interval $[x_1,x_2]$ in $G_n$. Therefore, both parts together yield
\begin{align}
\mathbb{P}_n(E_{n+1}) \leq \tfrac{(k+\alpha)(n+n_0)}{(2+\alpha)(n+n_0)-2}\left(P^{(n)}_{x_1,x_2}(k)-P^{(n)}_{x_1,x_2}(k-1)\right) f_1(\Psi_n(x_1),\Psi_n(x_2)). \label{eqfUpperbound}
\end{align}
}

\edit{Similarly, we can achieve a lower bound for $\mathbb{P}_n(E_{n+1})$ if we only consider samples of candidates where exactly one vertex is located inside $[x_1,x_2]$.} Thus, we obtain 
\begin{align}
\edit{\mathbb{P}_n}(E_{n+1}) \geq \frac{(k+\alpha)(n+n_0)}{(2+\alpha)(n+n_0)-2}\left(P^{(n)}_{x_1,x_2}(k)-P^{(n)}_{x_1,x_2}(k-1)\right) f_2(\Psi_n(x_1),\Psi_n(x_2)), \label{eqfLowerbound}
\end{align}
where $f_2$ is given by
\begin{equation}\label{deff2}
f_2(y_1,y_2) = \sum_{s=1}^r s\Xi_s {r\choose s}y_1^{s-1}(1-y_2)^{r-s}.
\end{equation}
With these bounds, which will be crucial for the asymptotic degree later, we are ready to proceed to the stochastic approximation.
\begin{lem}\label{ex num vert in G deg leq k}
Let $k\in \mathbb{N}$ and $0< x_1 < x_2< 1$. Define for $j\in \set{1,2}$ the random variables
\begin{equation*}
A^{(n)}_j = A^{(n)}_j(k) := \frac{k+\alpha}{2+\alpha} f_j\left(\Psi_n(x_1), \Psi_n(x_2)\right) P^{(n)}_{x_1,x_2}(k-1) + (x_2 - x_1)
\end{equation*}
and
\begin{equation*}
K^{(n)}_j = K^{(n)}_j(k) := 1 + \frac{k+\alpha}{2+\alpha} f_j\left(\Psi_n(x_1), \Psi_n(x_2)\right),
\end{equation*}
for $n\in \mathbb{N}_0$. Let $\mathcal{F}_{n}$ be the filtration generated by the sequence of graphs $(G_i,x_{i};i\leq n)$ and, for $n\in \mathbb{N}_0$,
\begin{equation*}
\xi^{(n+1)} = (n + n_0 + 1)\left( P^{(n+1)}_{x_1,x_2}(k) - \mathbb{E}\left[P^{(n+1)}_{x_1,x_2}(k) \big| \mathcal{F}_n\right]\right).
\end{equation*}
Then, for the growth of the proportion of vertices with degree at most $k$ and location inside $[x_1,x_2]$, it holds 
\begin{align*}\label{sae inequ}
\frac{A^{(n)}_1 - K^{(n)}_1 P^{(n)}_{x_1,x_2}(k) + \xi^{(n+1)}-R^{(n)}}{n+n_0+1} & \leq P^{(n+1)}_{x_1,x_2}(k)-P^{(n)}_{x_1,x_2}(k) \\ & \leq\frac{A^{(n)}_2 - K^{(n)}_2 P^{(n)}_{x_1,x_2}(k) + \xi^{(n+1)}}{n+n_0+1},
\end{align*}
where \edit{$R^{(n)}$ is a non-random error term satisfying} $R^{(n)}/(n+n_0+1)=O(n^{-2})$ as $n\to\infty$.
\end{lem}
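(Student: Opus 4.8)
The plan is to track the one-step dynamics of the unnormalised count $N^{(n)}:=(n+n_0)P^{(n)}_{x_1,x_2}(k)$ and then cast the increment into the Robbins--Monro form. First I would observe that passing from $G_n$ to $G_{n+1}$ changes this count in exactly two ways: the incoming vertex $v_{n+1}$ has degree one, so for $k\geq 1$ it joins the count whenever $x_{n+1}\in[x_1,x_2]$; and the endpoint chosen for the new edge leaves the count precisely when it had degree $k$ and location in $[x_1,x_2]$, which is the event $E_{n+1}$. Since every vertex of the initial tree already has degree at least one, no degree-zero exceptions arise, and hence
\[
N^{(n+1)} = N^{(n)} + \mathbbm{1}_{\set{x_{n+1}\in[x_1,x_2]}} - \mathbbm{1}_{E_{n+1}},
\]
a difference lying in $\set{-1,0,1}$ (which in passing gives the boundedness of the noise used later).

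Taking conditional expectations given $\mathcal{F}_n$ and using $\mathbb{E}[\mathbbm{1}_{\set{x_{n+1}\in[x_1,x_2]}}\mid\mathcal{F}_n]=x_2-x_1$ (the location of $v_{n+1}$ is independent of the candidate sample, which is drawn only from $G_n$), I would divide by $n+n_0+1$ to get $\mathbb{E}[P^{(n+1)}_{x_1,x_2}(k)\mid\mathcal{F}_n]-P^{(n)}_{x_1,x_2}(k)=\bigl((x_2-x_1)-P^{(n)}_{x_1,x_2}(k)-\mathbb{P}(E_{n+1}\mid\mathcal{F}_n)\bigr)/(n+n_0+1)$. Adding and subtracting the conditional mean and recognising the definition of $\xi^{(n)}$ yields the exact identity
\[
P^{(n+1)}_{x_1,x_2}(k)-P^{(n)}_{x_1,x_2}(k)=\frac{(x_2-x_1)-P^{(n)}_{x_1,x_2}(k)-\mathbb{P}(E_{n+1}\mid\mathcal{F}_n)+\xi^{(n)}}{n+n_0+1}.
\]
The noise $\xi^{(n)}$ is common to both bounds because it is defined directly from $P^{(n+1)}$; only the drift term $\mathbb{P}(E_{n+1}\mid\mathcal{F}_n)$ gets estimated. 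Since it enters with a minus sign, the lower estimate \eqref{eqfLowerbound} (with $f_2$) produces the \emph{upper} bound on the increment and the upper estimate \eqref{eqfUpperbound} (with $f_1$) produces the \emph{lower} bound, matching the indices in the statement.

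The only point needing care is the prefactor $\tfrac{n+n_0}{(2+\alpha)(n+n_0)-2}$ in \eqref{eqfUpperbound} and \eqref{eqfLowerbound}, which differs from the $\tfrac{1}{2+\alpha}$ appearing in $A^{(n)}_j,K^{(n)}_j$. Writing $c_n=\tfrac{n+n_0}{(2+\alpha)(n+n_0)-2}$ and using $(n+n_0-1)(2+\alpha)+\alpha=(2+\alpha)(n+n_0)-2$ (the denominator of \eqref{PA}), I would compute
\[
c_n-\frac{1}{2+\alpha}=\frac{2}{(2+\alpha)\bigl((2+\alpha)(n+n_0)-2\bigr)}=O(n^{-1}).
\]
Substituting $c_n=\tfrac{1}{2+\alpha}+\bigl(c_n-\tfrac1{2+\alpha}\bigr)$ into the estimates and collecting terms reconstructs $A^{(n)}_j-K^{(n)}_jP^{(n)}_{x_1,x_2}(k)=(x_2-x_1)-P^{(n)}_{x_1,x_2}(k)-\tfrac{k+\alpha}{2+\alpha}f_j\bigl(P^{(n)}_{x_1,x_2}(k)-P^{(n)}_{x_1,x_2}(k-1)\bigr)$ (writing $f_j$ for $f_j(\Psi_n(x_1),\Psi_n(x_2))$), leaving the single correction
\[
-(k+\alpha)\Bigl(c_n-\tfrac{1}{2+\alpha}\Bigr)f_j\bigl(\Psi_n(x_1),\Psi_n(x_2)\bigr)\bigl(P^{(n)}_{x_1,x_2}(k)-P^{(n)}_{x_1,x_2}(k-1)\bigr).
\]
This correction is non-positive since all its factors are non-negative; hence on the upper ($j=2$) side it may simply be dropped to give the clean bound stated, while on the lower ($j=1$) side it is exactly $-R^{(n)}$ with $R^{(n)}:=(k+\alpha)(c_n-\tfrac1{2+\alpha})f_1(\Psi_n(x_1),\Psi_n(x_2))(P^{(n)}_{x_1,x_2}(k)-P^{(n)}_{x_1,x_2}(k-1))$. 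As $c_n-\tfrac1{2+\alpha}=O(n^{-1})$ and the remaining factors are bounded, $R^{(n)}=O(n^{-1})$ and thus $R^{(n)}/(n+n_0+1)=O(n^{-2})$, as claimed. I expect the genuine difficulty to be precisely this denominator mismatch together with keeping straight which of $f_1,f_2$ yields which inequality; the drift-plus-noise decomposition is otherwise routine, and it is the definite sign of the leftover correction that permits the upper bound to be written without a remainder.
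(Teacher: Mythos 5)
Your proposal is correct and follows essentially the same route as the paper's proof: the same drift-plus-noise decomposition $P^{(n+1)}_{x_1,x_2}(k)-P^{(n)}_{x_1,x_2}(k) = \mathbb{E}\bigl(P^{(n+1)}_{x_1,x_2}(k)\big|\mathcal{F}_n\bigr) + \xi^{(n)}/(n+n_0+1) - P^{(n)}_{x_1,x_2}(k)$, the same use of \eqref{eqfUpperbound} and \eqref{eqfLowerbound} to bound the drift (with $f_1$ giving the lower and $f_2$ the upper bound on the increment), and the same remainder $R^{(n)}$ absorbing the $O(n^{-1})$ gap between the prefactor $\tfrac{n+n_0}{(2+\alpha)(n+n_0)-2}$ and $\tfrac{1}{2+\alpha}$. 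Your explicit sign argument for dropping the correction on the $f_2$ side, and the exact formula for $R^{(n)}$, merely spell out what the paper leaves implicit.
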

\begin{proof}
Since
\begin{equation*}
P^{(n+1)}_{x_1,x_2}(k)-P^{(n)}_{x_1,x_2}(k) = \mathbb{E}\left(P^{(n+1)}_{x_{1},x_{2}}(k)\Big|\mathcal{F}_{n}\right) + \frac{\xi^{(n+1)}}{n + n_0 + 1} -P^{(n)}_{x_1,x_2}(k),
\end{equation*}
it is sufficient to find bounds for the expected increase in the number of vertices with degree at most $k$ and location inside the interval $[x_1,x_2]$ when $v_{n+1}$ joins the graph with location $x_{n+1}$, given $G_n$. This can be expressed by 
\begin{equation*}\label{expected increase}
\mathbb{E}\left((n+n_0+1)P^{(n+1)}_{x_{1},x_{2}}(k)\Big|\mathcal{F}_{n}\right)=(n+n_0)P^{(n)}_{x_{1},x_{2}}(k)+\mathbb{P}(x_{n+1}\in [x_1,x_2]) - \edit{\mathbb{P}_{n}(E_{n+1})},
\end{equation*}
The first term here counts the number of degree at most $k$ vertices in $G_n$ with locations in the interval $[x_1,x_2]$. The second term is the probability that the location of the new vertex $v_{n+1}$ falls into the same interval\edit{. Both $\mathbb{P}_n$ and $E_{n+1}$ are as defined as above. We have utilized here the fact that $P_{x_1,x_2}^{(n)}(k)$ is $\mathcal{F}_n$-measurable, the new location $x_{n+1}$ is independent of $\mathcal{F}_n$, and that the event $E_{n+1}$ only depends on the graph $G_n$ and the corresponding locations of vertices contained within.}
As the locations are i.i.d. uniform, this probability is equal to $x_2-x_1$. For the probability of the event $E_{n+1}$ an upper and lower bound is given by \eqref{eqfUpperbound} and \eqref{eqfLowerbound}. Hence, we have
\begin{equation*}
\begin{split}
\mathbb{E}\left((n+n_0+1)P^{(n+1)}_{x_{1},x_{2}}(k)\Big|\mathcal{F}_{n}\right)\leq &\frac{k+\alpha}{2+\alpha} f_2(\Psi_n(x_{1}),\Psi_n(x_{2}))\left(P^{(n)}_{x_{1},x_{2}}(k-1)-P^{(n)}_{x_{1},x_{2}}(k)\right)\\
&+(n+n_0)P^{(n)}_{x_{1},x_{2}}(k)+(x_{2}-x_{1})
\end{split}
\end{equation*}
as well as
\begin{equation*}
\begin{split}
\mathbb{E}\left((n+n_0+1)P^{(n+1)}_{x_{1},x_{2}}(k)\Big|\mathcal{F}_{n}\right)\geq &\frac{k+\alpha}{2+\alpha} f_1(\Psi_n(x_{1}),\Psi_n(x_{2}))\left(P^{(n)}_{x_{1},x_{2}}(k-1)-P^{(n)}_{x_{1},x_{2}}(k)\right)\\
&+(n+n_0)P^{(n)}_{x_{1},x_{2}}(k)+(x_{2}-x_{1}) - R^{(n)},
\end{split}
\end{equation*}
where $R^{(n)}=O(n^{-1})$ is an error term, occurring as the difference of the given bound in \eqref{eqfUpperbound} and the first summand on the right-hand side of the equation together with the fact that $(P^{(n)}_{x_1,x_2}(k-1)-P^{(n)}_{x_1,x_2}(k))\geq -1$ and the boundedness of $f_1$. 
\end{proof}
Since the number of vertices with degree at most $k$ and location inside $[x_1,x_2]$ can change by at most one if we add a new vertex $v_{n+1}$ to the graph $G_n$, the noise $\xi^{(n)}$ defined in Lemma \ref{ex num vert in G deg leq k} is absolutely bounded by one. Additionally, it holds $\mathbb{E}\left[ \xi^{(n+1)} \vert \mathcal{F}_n \right] = 0$ by its definition. Therefore, we can use stochastic approximation techniques to construct bounds for the asymptotic behaviour of the proportion of vertices with degree at most $k$ and location inside $[x_1,x_2]$.
\begin{lem}\label{limbound}
For $\edit{\alpha\geq \alpha_c}$ and all $k\in \mathbb{N}$, the proportion of vertices with degree at most $k$ and location inside $[x_{1},x_{2}]\subset(0,1)$ satisfies
\[
\edit{L_1(k) \leq \liminf_{n\to \infty} P_{x_1,x_2}^{(n)}(k) \leq \limsup_{n\to\infty}P_{x_1,x_2}^{(n)}(k)\leq L_2(k)}
\]
almost surely, where
\begin{align*}
L_j(k) = (x_2-x_1)\left(1-\frac{\Gamma\left(\alpha+1+\frac{2+\alpha}{f_j(\Psi(x_1),\Psi(x_2))}\right)\Gamma(\alpha+1+k)}{\Gamma(\alpha+1)\Gamma\left(\alpha+1+k+\frac{2+\alpha}{f_j(\Psi(x_1),\Psi(x_2))}\right)}\right), \ \text{ for }j\in \set{1,2}.
\end{align*}
\end{lem}
\begin{proof}
We prove the result by applying Lemma~\ref{conflem} to the observed bounds in Lemma~\ref{ex num vert in G deg leq k}. Here, we focus on the lower bound \edit{for the $\liminf$}; the upper bound \edit{ for the $\limsup$} follows by replacing $f_1$ by $f_2$ in the arguments. \edit{For each fixed $k$, the boundedness conditions on $A_1^{(n)}$ and $K_1^{(n)}$, defined in Lemma~\ref{ex num vert in G deg leq k}, and $P_{x_1,x_2}^{(n)}(k)$ as well as the assumptions (ii) and (iii) are straight forward to check. We have already shown (i) above.} We must now show that $A_1^{(n)}$ and $K_1^{(n)}$ converge such that $\edit{\liminf_{n\to\infty}}(A_1^{(n)}/K_1^{(n)})\geq L_1(k)$, for every $k$. First note that, since $\edit{\alpha \geq \alpha_c}$, $\Psi_n(x)$ converges almost surely to $\Psi(x)$ as defined in Section~\ref{SubSecPhase} for all $x\in [0,1]$. Hence, $K^{(n)}_1$ converges almost surely to $1 + \frac{k+\alpha}{2+\alpha} f_1\left(\Psi(x_1), \Psi(x_2)\right)$.
Now, the theorem can be derived by induction. Let $k=1$, then by definition $A_1^{(n)} = x_2 - x_1$. Hence, $\edit{\liminf}_{n\to \infty} P_{x_1,x_2}^{(n)}(1) \geq L_1(1)$ almost surely. Assume that for an arbitrary fixed $k\in \mathbb{N}$ the stated lower bound holds. Then, for $k+1$, we get
\[
\edit{\liminf_{n\to \infty}} A^{(n)}_1 \geq \frac{k+1+\alpha}{2+\alpha} f_1(\Psi(x_1), \Psi(x_2))L_1(k) + (x_2 - x_1)
\]
almost surely and hence
\begin{align*}
\edit{\liminf\limits_{n\to \infty}} P_{x_1,x_2}^{(n)}(k+1) &\geq \frac{\frac{k + 1 +\alpha}{2+\alpha} f_1(\Psi(x_1), \Psi(x_2))L_1(k) + (x_2 - x_1)}{1 + \frac{k+1+\alpha}{2+\alpha} f_1\left(\Psi(x_1), \Psi(x_2)\right)}\\
&=(x_2 - x_1)\left(1 - \frac{\Gamma\left(\alpha+1+\frac{2+\alpha}{f_j(\Psi(x_1),\Psi(x_2))}\right)\Gamma(\alpha+2+k)}{\Gamma(\alpha+1)\Gamma\left(\alpha+2+k+\frac{2+\alpha}{f_j(\Psi(x_1),\Psi(x_2))}\right)}\right)
\end{align*}
almost surely. 
\end{proof}
\subsection{Limiting degree distribution}\label{SubSecLimitGed}
In this section, we use the established bounds of Lemma~\ref{limbound} to prove the main results stated in Section~\ref{SubSecMainResult}. To this end, we consider now the proportion of vertices located within some interval that have a given maximum degree. We show, that in the late time regime this proportion converges, by shrinking the interval to a single point, to some probability kernel $\mu$ on $\mathcal{P}(\N)\times (0,1)$. Here, $\mathcal{P}(\N)$ denotes the set of all subsets of $\N$. We show that this probability kernel $\mu$ is heavy tailed, proving Theorem~\ref{ThmLocalDegree} as $\nu(k,x)=\mu(\{k,k+1,\dots\},x)$

\begin{lem}\label{LemLocalDegree}
If $\alpha\geq \alpha_c$, there exists a probability kernel $\mu:\mathcal{P}(\N)\times(0,1)\to [0,1]$ such that
\begin{enumerate}
\item[(i)] Almost surely, 
\[\lim_{x_1\downarrow x}\lim_{n\rightarrow\infty} \frac{P^{(n)}_{x,x_1}(k)}{x_1-x}=\mu(\{1,\dots,k\},x).\]
\item[(ii)] $\mu(\{k,k+1,\dots\},x) = k^{-\frac{2+\alpha}{f(\Psi(x))}+o(1)}$, as $k\rightarrow\infty$.
\end{enumerate}
\end{lem}
\begin{proof}
Note that the functions $f_1$ and $f_2$, defined in \eqref{deff1} and \eqref{deff2}, both converge to the same limit as $y_1\downarrow y$, namely
\[\lim_{y_1\downarrow y}f_1(y,y_1)=\lim_{y_1\downarrow y}f_2(y,y_1)=\sum_{s=1}^r s\Xi_s {r\choose s}y^{s-1}(1-y)^{r-s},\]
that is $f(y)$, the function used to describe the condensation phase transition in Section \ref{SubSecPhase}. Sending first $n\to\infty$ and applying then the limit $x_1\to x$ on the bounds observed in Lemma~\ref{limbound}, we get by continuity of $f$ and $\Psi$,
\begin{equation} \label{kernelCDF}
\lim_{x_1\downarrow x}\lim_{n\rightarrow\infty} \frac{P^{(n)}_{x,x_1}(k)}{x_1-x} = 1-\frac{\Gamma\left(\alpha+1+\frac{2+\alpha}{f(\Psi(x))}\right)}{\Gamma(\alpha +1)}\frac{\Gamma(\alpha+1+k)}{\Gamma\left(\alpha+1+k+\frac{2+\alpha}{f(\Psi(x))}\right)}.
\end{equation}
Note that
\begin{equation}\label{Stirling}
\frac{\Gamma(\alpha+1+k)}{\Gamma\left(\alpha+1+k+\frac{2+\alpha}{f(\Psi(x))}\right)} \sim (\alpha+1+k)^{-\frac{2+\alpha}{f(\Psi(x))}}\edit{\sim k^{-\frac{2+\alpha}{f(\Phi(x))}}}, \text{ as } k\uparrow\infty
\end{equation}
by Stirling's formula. For fixed $x\in(0,1)$, the right-hand side of \eqref{kernelCDF} converges to one as $k\rightarrow\infty$, and hence defines a distribution function. Moreover, for fixed $k$, the right-hand side of \eqref{kernelCDF} is continuous in $x$. Therefore, the desired probability kernel $\mu$ exists, proving (i). The tail behaviour stated in (ii) is an immediate consequence of \eqref{Stirling}.
\end{proof}
\edit{Since the empirical distribution of the vertices' locations converges to the uniform distribution on $(0,1)$}, we can now use the probability kernel to properly describe $\mu_k$, the asymptotic proportion of vertices with degree at least $k$ by integrating $\mu(\{k,k+1,\dots\},x)$ with respect to the location. Namely,
\[\mu_k = \int_0^1 \mu(\{k,k+1,\dots\}, x)dx.\]
We are now ready to prove Theorem~\ref{ThmPowerLaw}.

\begin{proof}[Proof of Theorem \ref{ThmPowerLaw}]
Since we only consider the case when $k$ is (very) large, we want to apply a saddle point method approach. To this end, write 
\[g(x)=\frac{\Gamma\left(\alpha+1+\frac{2+\alpha}{f(x)}\right)}{\Gamma(\alpha +1)}\]
and consider
\[\mu_k = \int_0^1 \mu(\{k,k+1,\dots\},x)dx = \edit{\int_0^1 g(\Psi(x))\tfrac{\Gamma(\alpha+1+k)}{\Gamma\big(\alpha+1+k+\tfrac{2+\alpha}{f(\Phi(x)}\big)}dx}\]
given from the proof of Lemma~\ref{LemLocalDegree}. \edit{Using \eqref{Stirling} this reads
\[\mu_k \sim\int_0^1 g(\Phi(x))\exp\big(-\tfrac{2+\alpha}{f(\Phi(x))}\log(k)\big)dx,\]
as $k\to\infty$. If $f\equiv c$ is constant (e.g.\ when $\Xi_s=1/r$), the exponent does not depend on $x$ and the claim follows immediately. Hence, we assume that $f$ is non constant.} Now, since we work in the noncondensation phase, $\Psi(x)$ is the unique zero of $F_1(y;x,\Xi)$, defined in Section~\ref{SubSecPhase}. Due to the structure of $F_1(\Psi(x); x,\Xi)=0$, we can see that the inverse of $\Psi$ exists and that it is a polynomial. Thus, it is differentiable. Together with $\Psi(0)=0$, almost surely, and $\Psi(1)=1$ a change of variable leads to
\[\mu_k\edit{\sim}\int_0^1 (\Psi^{-1})'(y)\cdot g(y)\exp\left(-\frac{2+\alpha}{f(y)}{\log(k)}\right)dy\]
\edit{as $k\to\infty$}. For $k\to\infty$, this integral gets dominated by its largest peak that is located at the minimum value of $(2+\alpha)/f(y)$, occurring at the maximum value of $f(y)$. Since $f$ is a non-negative polynomial, there exists some $x_0\in [0,1]$ that maximizes $f$. \edit{In the case that $x_0$ is not uniquely determined, we can split $[0,1]$ in finitely many disjoint subintervals such that each subinterval only contains exactly one maximizer. We then integrate these subintervals separately which leads to a sum of integrals all of the same order.}
Moreover, we know that the second derivative of $f$ exists and that $-\left((2+\alpha)/f\right)''(x_0)>0$ as well as $(\Psi^{-1})'(x_0)\cdot g(x_0)>0$. Hence, we get by the saddle point method, for some constant $C$ and with $2+\alpha_c=f(x_0)$ that
\begin{equation*}
\mu_k \edit{\sim} C\sqrt{\frac{2 \pi}{\log(k+\alpha)}}\exp\left(-\frac{2+\alpha}{2+\alpha_c}\log(k+\alpha)\right)\cdot\left(1+O\left(\frac{1}{\log(k+\alpha))}\right)\right), \label{saddlePoint}
\end{equation*}
as $k\to \infty$, which yields the desired result.
\end{proof}
\section{Examples and simulations}\label{Section 4}
In this section, we discuss a number of examples of the model and use the stated results to calculate the critical value $\alpha_c$ and the power law exponent $\tau$. Simulations of the model back up those results and showcase the different behaviour of the local degree distribution. For this, the different examples are simulated for an initial tree graph of $100$ vertices where $1000000$ new vertices are added to the graph. The code for the simulations can be freely accessed at: \url{http://www.mi.uni-koeln.de/~agrauer/files/code/PA_with_location.R}

\begin{figure}[t]
\centering
\begin{subfigure}[b]{0.48\textwidth}\centering 
\includegraphics[width=\textwidth]{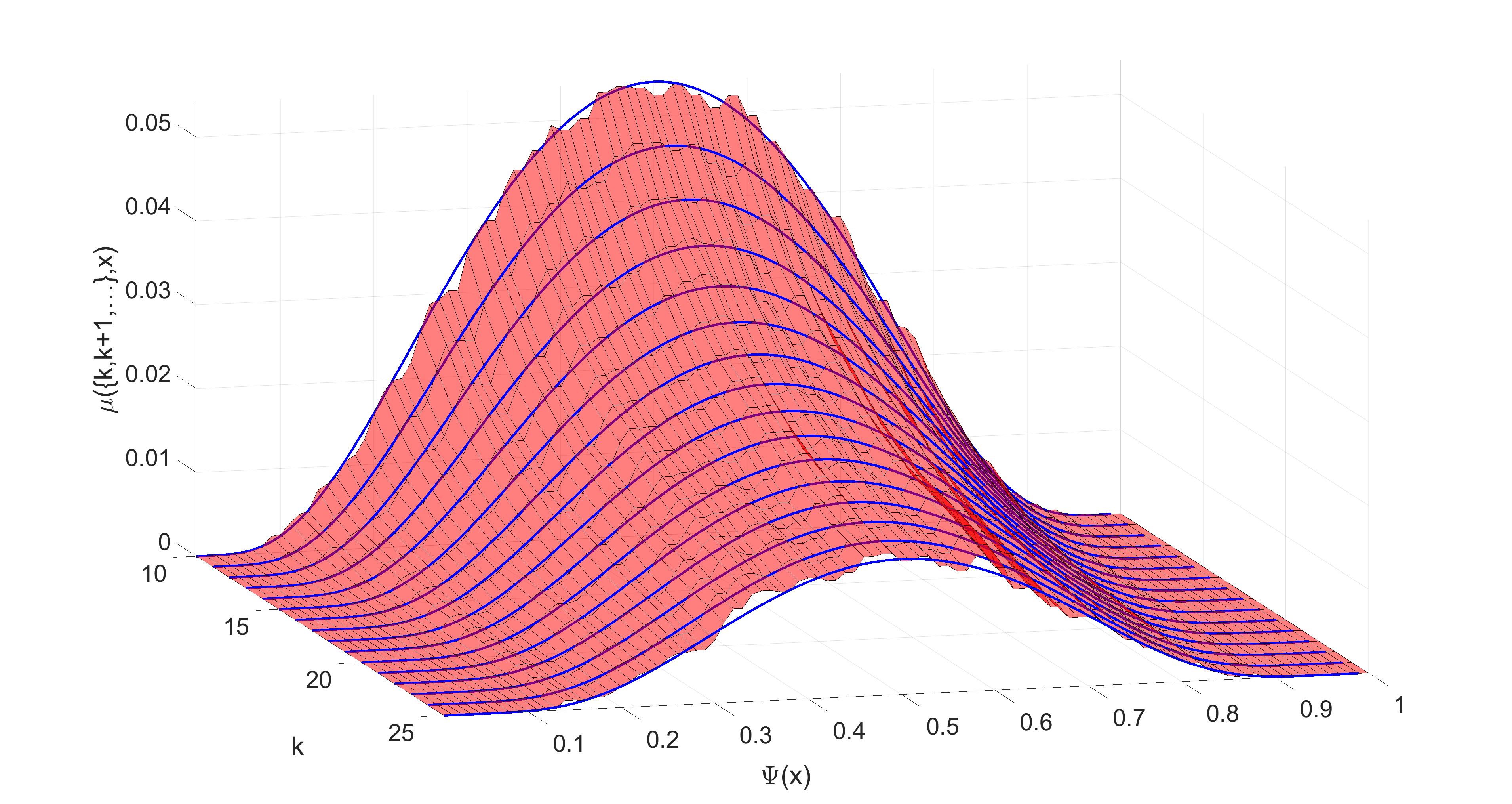}
\caption[]{{$\Xi = (0,1,0)$.}}\label{fig:(1)}
\end{subfigure}
\begin{subfigure}[b]{0.48\textwidth}\centering 
\includegraphics[width=\textwidth]{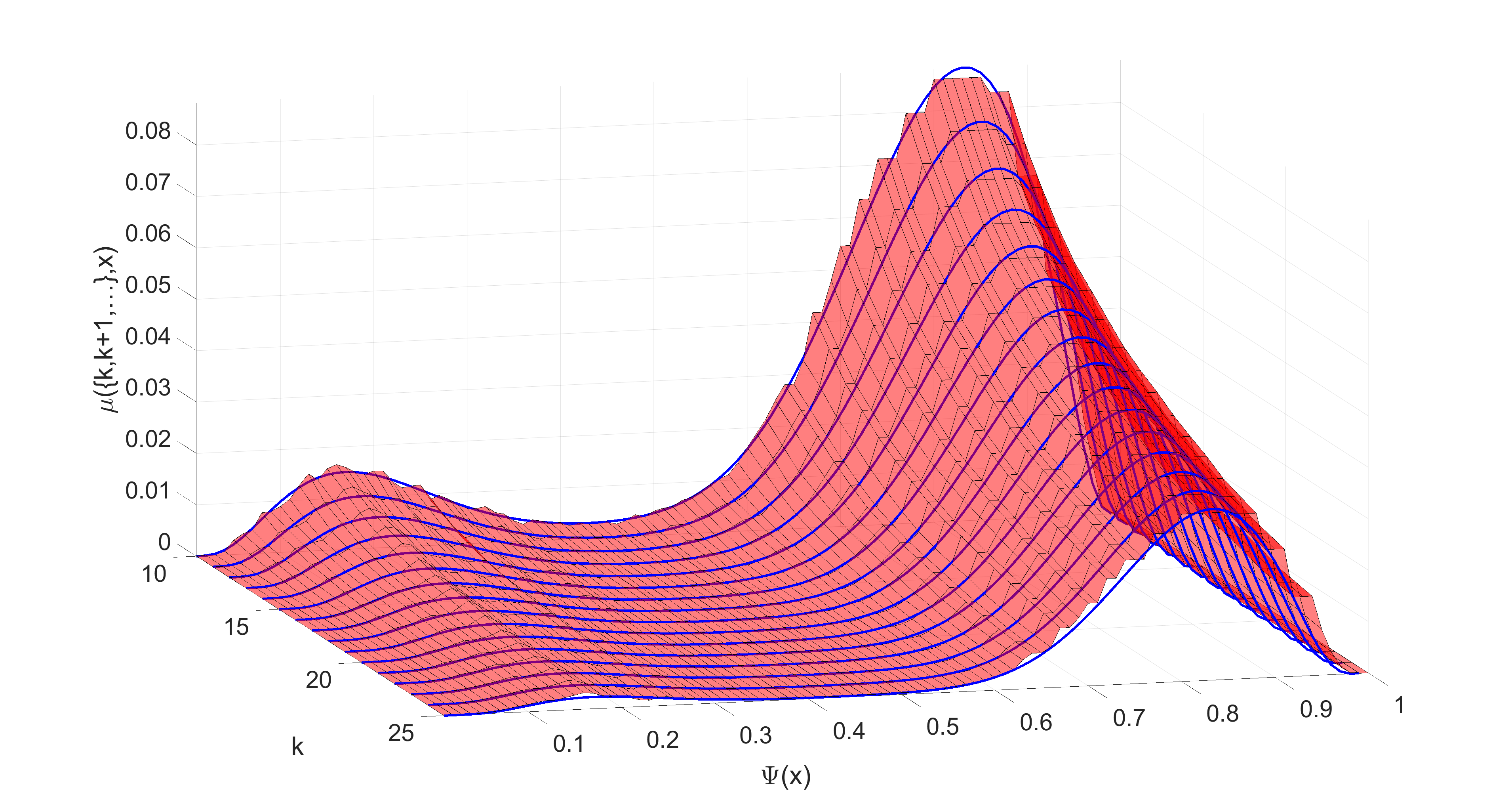}
\caption[]{{$\Xi = \left(0,\frac{1}{2},0,0,0,\frac{1}{2},0\right)$.}}\label{fig:(2)}
\end{subfigure}
\begin{subfigure}[b]{0.48\textwidth}\centering 
\includegraphics[width=\textwidth]{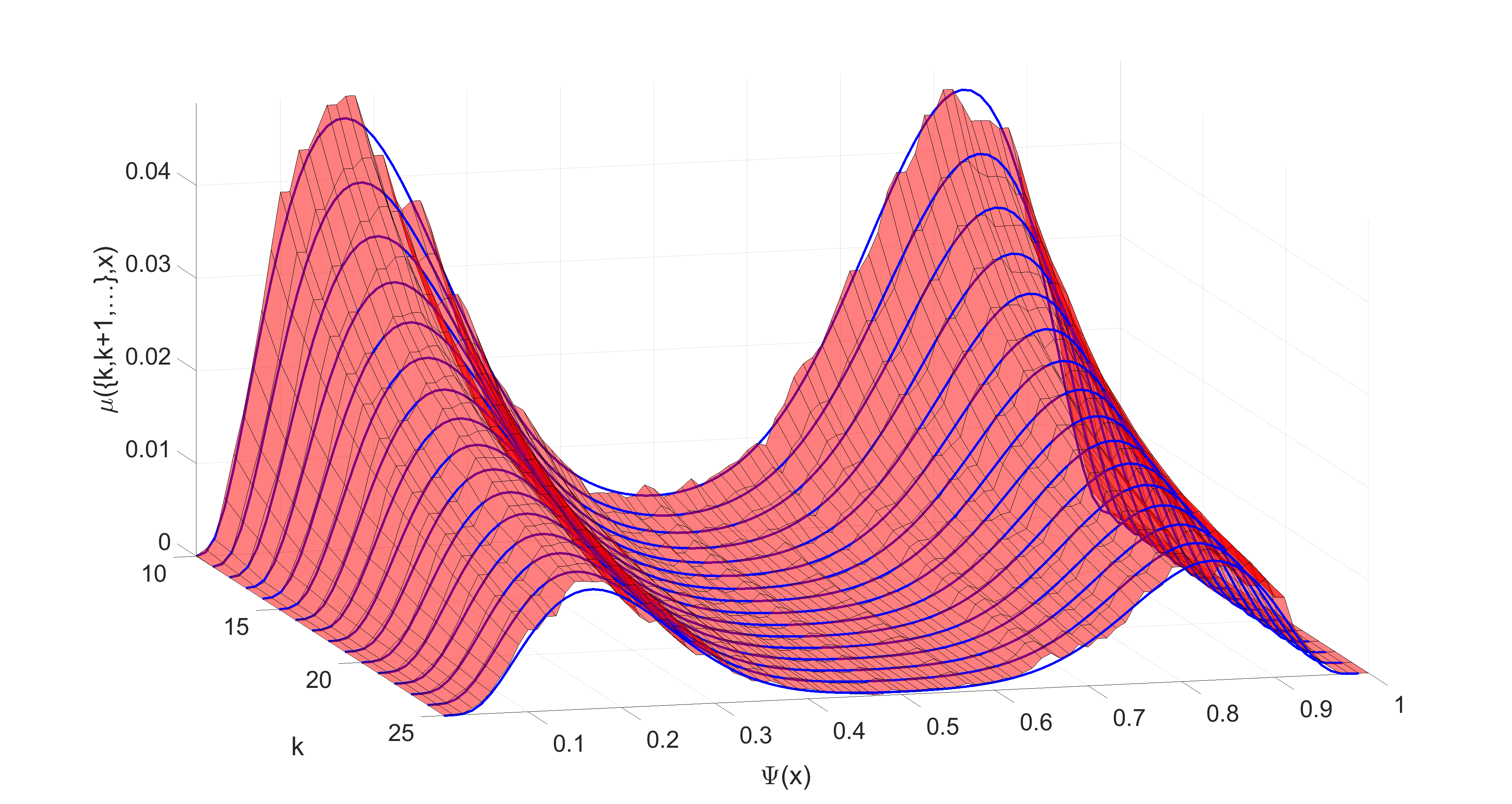}
\caption[]{{$\Xi = \left(0,\frac{1}{3},0,0,0,\frac{2}{3},0\right)$.}}\label{fig:(3)}
\end{subfigure}
\begin{subfigure}[b]{0.48\textwidth}\centering 
\includegraphics[width=\textwidth]{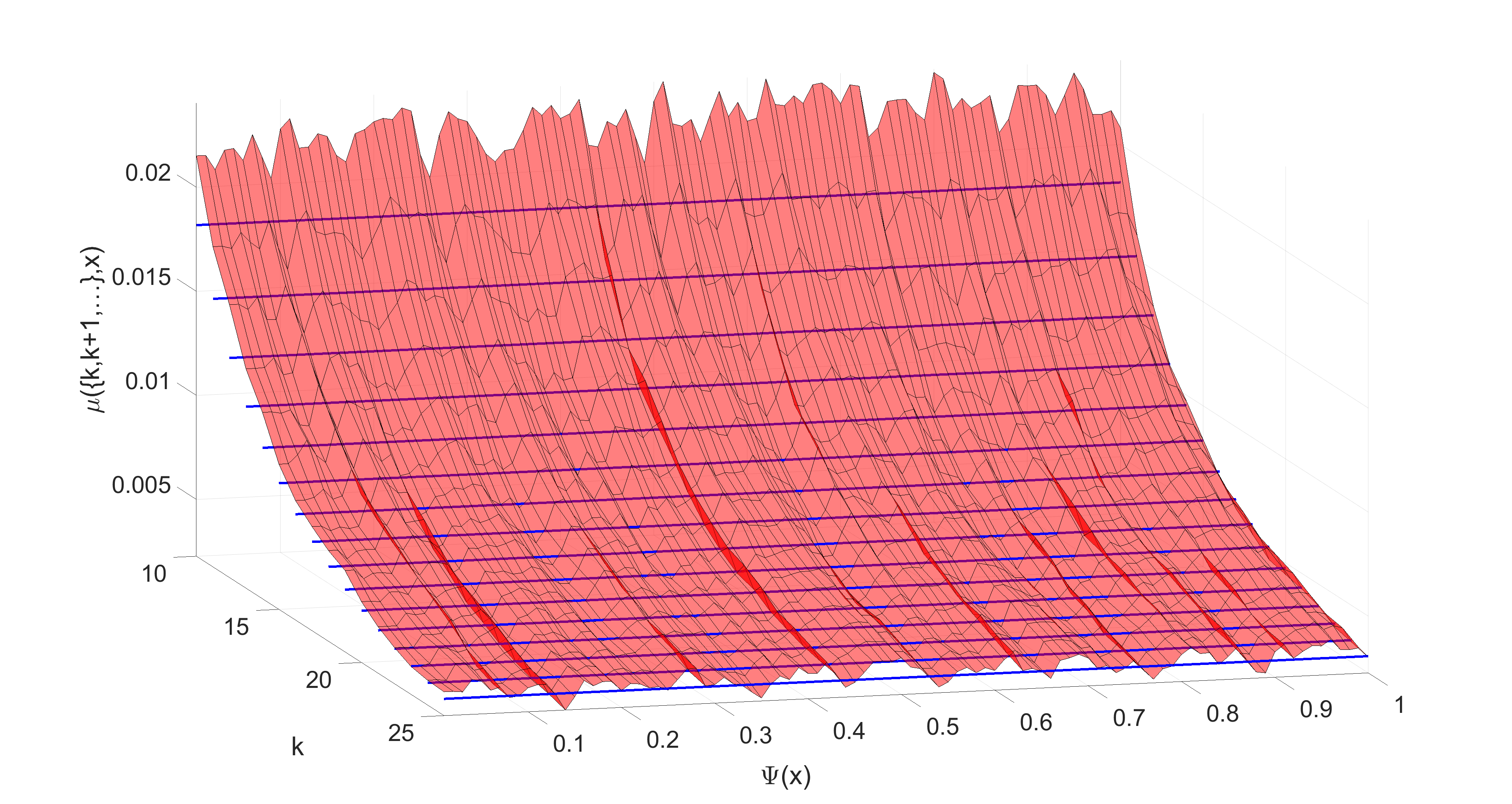}
\caption[]{{$\Xi = \left(\frac{1}{7},\frac{1}{7},\frac{1}{7},\frac{1}{7},\frac{1}{7},\frac{1}{7},\frac{1}{7}\right)$.}}\label{fig:(4)}
\end{subfigure}
\caption[]{Simulation of the local degree distribution for the \edit{three examples of this section. We have inserted picture (d), which coincides to standard preferential attachment, for comparison.} The red surface shows the simulation results while the blue curves depicts the analytical result of Lemma \ref{LemLocalDegree} for each $k$. Each plot is generated for $\Psi(x)\in(0,1)$ and $k\in[10,25]$ and $\alpha = 0$.} \label{fig:simulations}
\end{figure}
Throughout this section, we denote by $f_\text{max}$ the global maximum of $f$ on $[0,1]$. The first example is the \emph{middle of three model} introduced in \cite{Sou.2}. This model corresponds to the selection vector $\Xi=(0,1,0)$, which implies $f(y)=6y(1-y)$ due to equation \eqref{connectprobwoPA}. This function is maximized at $y = 1/2$ giving $f_\text{max} = 3/2$. As seen in Figure~\ref{fig:(1)}, $y$ coincides with the maximiser of the local degree distribution $\mu([k,\infty),x)$, for any $k\in \mathbb{N}$. Using the method introduced in Section~\ref{SubSecPhase}, the critical value is $\alpha_c=-1/2$, agreeing with the results in \cite{Sou.2}. By Theorem ~\ref{ThmPowerLaw} it can be seen that the degree distribution associated with the middle of three model follows
\begin{equation*}
\mu_k=k^{-\frac{2+\alpha}{3/2}+o(1)},\text{ as }k\uparrow\infty.
\end{equation*}
Introduced in \cite{Sou.2} is the \emph{second or sixth of seven model}, corresponding to $\Xi=(0,\frac{1}{2},0,0,0,\frac{1}{2},0)$. Hence, the associated function is $f(y)=21y(1-y)\left((1-y)^4+y^4\right)$. This leads to $f_\text{max}=\frac{7\left(5\sqrt{10}-14\right)}{9}$. Unlike the middle of three model, $f$ has two maximisers which are also peaks of the local degree distribution, see Figure \ref{fig:(2)}. The critical value for this example is $\alpha_c=\frac{35\sqrt{10}-116}{9}\approx-0.591$ and it holds
\begin{equation*}
\mu_k=k^{-\frac{9(2+\alpha)}{7\left(5\sqrt{10}-14\right)}+o(1)},\text{ as }k\uparrow\infty.
\end{equation*}
The final example is an \emph{asymmetric} version of the second or sixth of seven model, i.e. $\Xi=(0,\frac{1}{3},0,0,0,\frac{2}{3},0)$ as selection vector leading to $f(y)=14y(1-y)\left((1-y)^4+2y^4\right)$. Although this function has two local maximisers, we only care about the global maximum point with $f_\text{max}\approx1.8769$. Figure~\ref{fig:(3)} shows that the mass of the local degree distribution vanishes for large $k$ at the non-global maximiser but concentrates at the global one. The estimation of $f_\text{max}$ leads to the critical value $\alpha_c\approx -0.1231$ and 
\begin{equation*}
\mu_k\approx k^{-\frac{2+\alpha}{1.8769}+o(1)},\text{ as }k\uparrow\infty.
\end{equation*}
Although the proof of Theorem~\ref{ThmPowerLaw} only shows slow convergence to the stated result, our simulations show the stated power law behaviour. For the following figure the simulated degree distribution of the models is fitted to $k^{-\tau}$, considering the logarithmic correction term arising in the proof of Theorem~\ref{ThmPowerLaw}. For large $\alpha$, it is necessary to consider simulations of bigger graphs, since the degree is less important for the preferential attachment mechanism, which leads to a small maximum degree of the model. Note that in Figure \ref{fig:exponent} the power law exponent of the simulations in each example converges to $1$ as $\alpha \to \alpha_c$.
\begin{figure}[H]
\centering
\begin{subfigure}[b]{0.32\textwidth}\centering 
\includegraphics[width=\textwidth]{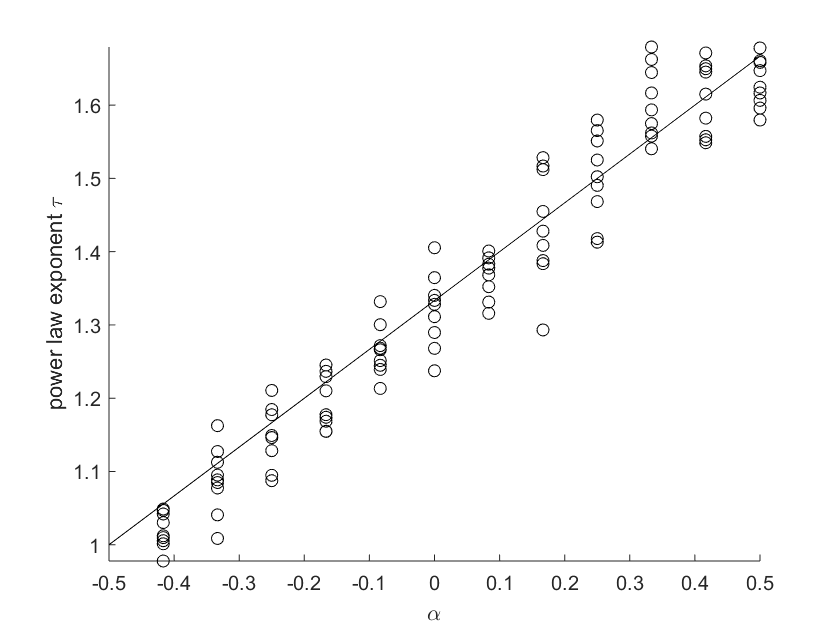}
\caption[]{{$\Xi = (0,1,0)$.}}\label{fig:(1e)}
\end{subfigure}
\begin{subfigure}[b]{0.32\textwidth}\centering 
\includegraphics[width=\textwidth]{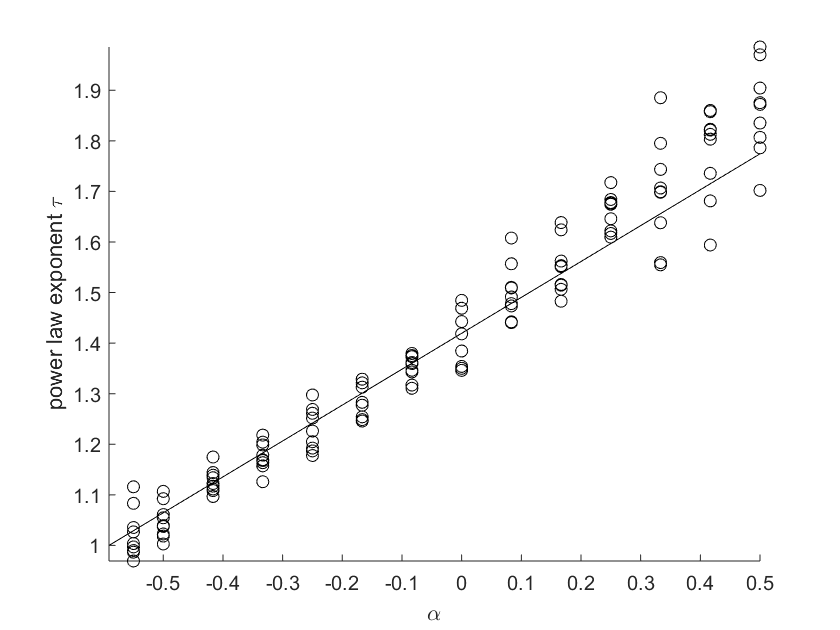}
\caption[]{{$\Xi = \left(0,\frac{1}{2},0,0,0,\frac{1}{2},0\right)$.}}\label{fig:(2e)}
\end{subfigure}
\begin{subfigure}[b]{0.32\textwidth}\centering 
\includegraphics[width=\textwidth]{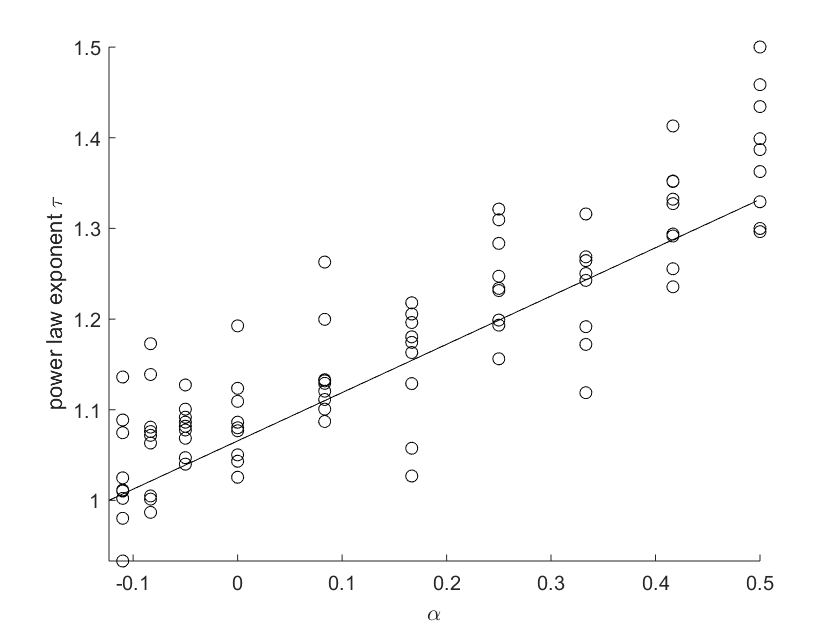}
\caption[]{{$\Xi = \left(0,\frac{1}{3},0,0,0,\frac{2}{3},0\right)$.}}\label{fig:(3e)}
\end{subfigure}
\caption[]{Simulations of the power law exponent of the degree distribution for each example for $\alpha$ between the corresponding $\alpha_c$ and $1/2$. The lines show the analytical result of Theorem~\ref{ThmPowerLaw}.} \label{fig:exponent}
\end{figure}
\section*{Acknowledgements}
The authors would like to thank the \emph{Heilbronn Institute for Mathematical Research} for their generous support in the form of a grant awarded under their `2019 Heilbronn Focused Research Grants' initiative.

\edit{The authors would also like to thank Jonathan Jordan for his guidance and direction throughout the writing of this article and Peter Gracar for his patient proof reading and generous support.}

\edit{Finally, we appreciate the effort which the associate editor and both referees have put into this article. Their detailed comments and constructive feedback were particularly useful.}

\end{document}